\def\timestamp{%
Time-stamp: <many-subalgebras.tex: maandag 06-04-2026 at 12:39:08 (cest)>}
\def\stripname Time-stamp: <#1: #2 #3 at #4 #5>{#3/#4 (#1)}
\edef\filedate{\expandafter\stripname\timestamp}
\newcommand\dom{\operatorname{dom}}
\newcommand\St{\operatorname{St}}
\newcommand\Pwmodfin{\mathcal{P}(\omega)/\mathit{fin}}
\newcommand\ceeseq[2][\beta]{\langle{#2}_{#1}:#1\in\cee\rangle}
\newcommand\omegaseq[1]{\langle{#1}_n:n\in\omega\rangle}
\newcommand\Nseq[1]{\langle{#1}_n:n\in\N\rangle}
\newcommand\orpr[2]{\langle{#1},{#2}\rangle}
\newcommand\calC{\mathcal{C}}
\newcommand\calF{\mathcal{F}}
\newcommand\cee{\mathfrak{c}}
\DeclareMathSymbol\A          \mathord{AMSb}{`A}
\DeclareMathSymbol\N          \mathord{AMSb}{`N}
\DeclareMathSymbol\Q          \mathord{AMSb}{`Q}
\DeclareMathSymbol\notsubseteq\mathrel{AMSb}{"2A}
\DeclareMathSymbol\le    \mathrel{AMSa}{"36}
\DeclareMathSymbol\ge    \mathrel{AMSa}{"3E}
\newtheorem{theorem}{Theorem}[section]
\newtheorem{lemma}[theorem]{Lemma}
\newtheorem{proposition}[theorem]{Proposition}
\theoremstyle{remark}
\newtheorem*{remark}{Remark}
\newcommand\Zbl{\ifhmode\unskip\spacefactor3000 \space\fi zbMATH\,}
\begin{document}

\title{Many subalgebras of $\Pwmodfin$}

\author[K. P. Hart]{Klaas Pieter Hart}
\address{Faculty EEMCS\\TU Delft\\
         Postbus 5031\\2600~GA {} Delft\\the Netherlands}
\email{k.p.hart@tudelft.nl}
\urladdr{https://fa.ewi.tudelft.nl/\~{}hart}

\begin{abstract}
In answer to a question on Mathoverflow we show that the Boolean
algebra $\mathcal{P}(\omega)/\mathit{fin}$ contains a family 
$\{\mathcal{B}_X:X\subseteq\mathfrak{c}\}$ of subalgebras
with the property that $X\subseteq Y$ implies $\mathcal{B}_Y$ is a subalgebra
of $\mathcal{B}_X$ and if $X\not\subseteq Y$ then $\mathcal{B}_Y$
is not embeddable into~$\mathcal{B}_X$.
The proof proceeds by Stone duality and the construction of a suitable
family of separable zero-dimensional compact spaces.
\end{abstract}

\subjclass{Primary 06E05; Secondary: 06E15, 54C05, 54D65, 54G20}
\keywords{$\mathcal{P}(\omega)/\mathit{fin}$, subalgebra, non-embeddability,
          Alexandroff double arrow, Bernstein set}

\date{\filedate}

\maketitle

\section*{Introduction}

The purpose of this note is to give a more leisurely presentation, complete
with definitions and references, of an answer to a question 
on \texttt{MathOverflow}~\cite{overflow442111}:
\begin{quote}
Is there a strictly decreasing chain of subalgebras
 of the Boolean algebra $\Pwmodfin$?
\end{quote}
The answer to the question as stated is an obvious ``yes'', but the poser
of the question asked for a sequence $\omegaseq B$ of subalgebras
such that $B_{n+1}\subseteq B_n$ \emph{and} $B_n$~is \emph{not} embeddable 
into~$B_{n+1}$, for all~$n$.

We shall show that the family of subalgebras of $\Pwmodfin$~is rich enough
to contain such a sequence; in fact, there is a family 
$\{B_X:X\subseteq\cee\}$ of subalgebras with the property that
for all subsets $X$ and~$Y$ of~$\cee$ we have: 
if $X\subseteq Y$ then $B_Y\subseteq B_X$ \emph{and} if $X\notsubseteq Y$ then
$B_Y$~is \emph{not} embeddable into~$B_X$.
This more than answers the question and shows that one can even have 
a decreasing chain of length~$\cee$ or a chain of order type that of the 
real line.

The construction of the family proceeds via Stone duality: rather
than constructing subalgebras of~$\Pwmodfin$ we construct a family
$\{K_X:X\subseteq\cee\}$ of separable compact zero-dimensional spaces
with the dual property that there is a  
set $\{h_{X,Y}:X\subseteq Y\subseteq\cee\}$ of continuous maps, where
$h_{X,Y}:K_X\to K_Y$ is a continuous surjection \emph{and}
if $X\notsubseteq Y$ then $K_Y$~is \emph{not} a continuous image of~$K_X$.
In addition all triangles in the set of maps will commute.

Then $K_\emptyset$ is a continuous image of~$\omega^*$, the Stone space 
of~$\Pwmodfin$, and hence so are all other spaces~$K_X$.
The maps $h_{\emptyset,X}$ embed the algebras of clopen sets of the~$K_X$
into~$\Pwmodfin$, the commutativity of the triangles in the family of 
continuous surjections yields the desired inclusions, and the nonexistence of 
further continuous surjections dualizes to the nonexistence of 
further embeddings.

\if00
\begin{remark}
It turned out, after this paper was written, that the original question 
had been answered 40~years before it was asked.
In~\cite{MR0735899} Murray Bell constructed a sequence~$\omegaseq{B}$ of 
subalgebras of~$\Pwmodfin$ with the property that for every $n\ge2$
the algebra~$B_n$ is $\sigma$-$n$-linked but not $\sigma$-$(n+1)$-linked.

At the end of the paper we will see how this sequence can be used to answer
the original question.
\end{remark}
\fi

\section{Preliminaries}

\subsection{Stone duality}

Stone's duality for Boolean algebras and compact zero-di\-men\-sio\-nal
spaces associates with every compact zero-dimensional space~$X$ its
Boolean algebra~$\calC_X$ of closed-and-open subsets and conversely
with every Boolean algebra~$B$ a compact zero-dimensional space~$\St(B)$,
its \emph{Stone space}.
The associations are each others inverses and they dualize various notions;
the most important for us is that an embedding $B\to C$ of Boolean algebras 
becomes a continuous surjection $\St(C)\to\St(B)$, and vice versa.  

The book~\cite{MR991565}*{Chapter~3} contains further information on 
Stone's duality for Boolean algebras and compact zero-dimensional spaces.

\subsection{Bernstein sets}

In our construction we shall use Bernstein sets in~$[0,1]$.
We say $A$~is a \emph{Bernstein set} in~$[0,1]$ if $A$ and its complement
both intersect every uncountable closed set in~$[0,1]$.
These are also called totally imperfect sets (\cite{zbMATH02640876})
because if a set is closed in~$[0,1]$ and contained in~$A$ then it must be 
countable.

For other topological material we refer to~\cite{MR1039321}.
The fact used here, that separable compact spaces are continuous images
of the remainder~$\omega^*$, can be proved using Theorem~3.5.13 
and Exercise~3.5.H of that book.

\section{The spaces}

The spaces are variations on Alexandroff's double-arrow space~$\A$,
called the two arrows space in~\cite{MR1039321}*{Exercise~3.10.C}.

The underlying set is
$D=\bigl([0,1]\times\{0,1\}\bigr)\setminus
   \bigl\{\orpr00,\orpr11\bigr\}$, 
ordered lexicographically and endowed with the order topology.
(We drop the points $\orpr00$ and $\orpr11$ because they
would be (the only) isolated points of $\A$.)

Pictorially we have taken the unit interval $[0,1]$ and split each point~$x$
of the open interval $(0,1)$ into two copies, $\orpr x0$ and $\orpr x1$.
The space~$\A$ is compact and separable, hence a continuous image
of~$\omega^*$.

The variations will be obtained by specifying a subset $X$ of $(0,1)$ and taking
$\A_X=\{\orpr xi: x\in X \to i=0\}$; that is, 
by splitting the points of $(0,1)\setminus X$ only.
Thus we can write $\A=\A_\emptyset$, and $[0,1]=\A_{(0,1)}$ for example.
In all our examples the complement of~$X$ will be dense in~$(0,1)$ and this
will ensure that $\A_X$~is zero-dimensional.

If $X\subseteq Y$ then there is a natural continuous surjection
$s:\A_X\to\A_Y$, given by
\begin{itemize}
\item $s(x,i)=\orpr xi$ if $x\notin Y$;
\item $s(x,i)=\orpr x0$ if $x\in Y\setminus X$; and
\item $s(x,0)=\orpr x0$ if $x\in X$. 
\end{itemize}

Our goal will be to create a family $\{S_X:X\subseteq\cee\}$ of subsets
of~$(0,1)$ such that with $K_X=\A_{S_X}$ for all~$X$ we get our family
$\{K_X:X\subseteq\cee\}$.

We shall construct a family $\{A_\alpha:\alpha\in\cee\}$ of subsets
of $(0,1)$ (all disjoint from $\Q$)
and put 
 $S_X=\Q\cup\bigcup_{\alpha\in X}A_\alpha$ for $X\subseteq\cee$.

Clearly then $X\subseteq Y$ implies $S_X\subseteq S_Y$ and hence
that $K_X$ maps onto~$K_Y$ by $h_{X,Y}:A_{S_X}\to\A_{S_Y}$ as described above.
It is readily seen that $h_{X,Z}=h_{Y,Z}\circ h_{X,Y}$, so all triangles in 
this family commute, as described in the introduction.

It remains to construct the sets $A_\alpha$ in such a way that whenever
$X\notsubseteq Y$ each of the $A_\alpha$ with $\alpha\in X\setminus Y$ 
will prohibit the existence of a continuous surjection from~$K_X$ onto~$K_Y$.

\medskip
To see how this may be accomplished note that since 
$A_\alpha\subseteq S_X$ the points of~$A_\alpha$ are not split in~$\A_{S_X}$.
In that case the subspace topology that $A_\alpha$~inherits from~$\A_{S_X}$
is the same as the subspace topology that it inherits from~$[0,1]$.

If $s:K_X\to K_Y$ is continuous then the composition $t\circ s$, where
$t:K_Y\to[0,1]$ is the map that sends $\orpr xi$ to~$x$, is continuous
as well and its restriction~$g$ to~$A_\alpha$ is also continuous.
We shall arrange matters in such a way that the only maps that can appear 
in this way will force the range of the map~$s$ to be countable.

\section{The sets $A_\alpha$}

We can obtain our sets $A_\alpha$ by a direct application of Theorem~2.0 
in~\cite{MR808722} but to keep this note reasonably self-contained
we shall repeat the construction for the special case that we need.  
The method goes back to~\cite{zbMATH03006477} and is 
occasionally referred to as 
``Sierpi\'nski's technique of killing homeomorphisms'' \cite{MR1173263},
but it can be used to eliminate other maps as well.

In our case we consider the set $\calF$ of all maps~$f$ that satisfy:
$\dom f$~is a co-countable subset of~$[0,1]$ and
$f:\dom f\to[0,1]$ is continuous.
For every $f\in\calF$ we let 
$S(f)=\{x\in\dom f:f(x)\neq x\}$ and 
$E(f)=\dom f\setminus S(f)$.
We choose a subset $C(f)$ of $\dom f$ such that the 
restriction $f:C(f)\to f\bigl[S(f)\bigr]$ is a bijection.

Before we continue we make some remarks that will be useful later.
For each $f\in\calF$ the domain is completely metrizable as it is a 
$G_\delta$-subset of~$[0,1]$.
As $S(f)$~is open in~$\dom f$, and $E(f)$~is closed, both sets are completely
metrizable as well.
Furthermore, the image $f\bigl[S(f)\bigr]$ is an analytic subset of~$[0,1]$.
By familiar results from Descriptive Set Theory it follows that each of these
sets either is countable or contains a topological copy of the Cantor set.
One can modify the construction outlined  
in~\cite{MR1039321}*{Problem~4.5.5} to prove these results.

This means that in each case we can check whether the set is countable 
by looking at its intersection with some Bernstein set.

The following proposition yields the family $\{A_\alpha:\alpha\in\cee\}$.

\begin{proposition}\label{prop.family}
There is a pairwise disjoint family 
$\{V\}\cup\{A_\alpha:\alpha\in\cee\}$ 
of Bernstein sets in~$(0,1)$ with the following properties.  
All are disjoint from $\Q$, and 
for every $f\in\calF$: if $f\bigl[S(f)\bigr]$, and hence $C(f)$, 
has cardinality~$\cee$
then for all~$\alpha$ the intersections $C(f)\cap A_\alpha$ 
and $f\bigl[C(f)\cap A_\alpha\bigr]\cap V$ both have cardinality~$\cee$.
\end{proposition}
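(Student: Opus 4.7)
The plan is a Sierpi\'nski-style transfinite recursion of length~$\cee$, picking one or two points at each stage to satisfy a single requirement, while preserving the invariant that the committed set of points has cardinality strictly less than~$\cee$. Before starting I would verify two cardinality bounds. First, $|\calF|\le\cee$: each $f\in\calF$ is determined by its restriction to a countable dense subset of its (separable) domain, and there are only $\cee$ cocountable subsets of~$[0,1]$, so $|\calF|\le\cee\cdot\cee^{\aleph_0}=\cee$. Second, there are only $\cee$ uncountable closed subsets of~$[0,1]$. Enumerate $\calF'=\{f\in\calF:|C(f)|=\cee\}$ as $\{f_\beta:\beta<\cee\}$ and the uncountable closed sets as $\{F_\beta:\beta<\cee\}$, and fix a surjection $\pi:\cee\to\cee\times\cee\times\{0,1,2\}$ all of whose fibres have cardinality~$\cee$.

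During the recursion I maintain, at each stage $\gamma<\cee$, pairwise disjoint sets $V^{(\gamma)}$ and $\{A^{(\gamma)}_\alpha:\alpha<\cee\}$ whose union $U^{(\gamma)}$ is disjoint from~$\Q$. Each stage commits at most two new points, so $|U^{(\gamma)}|\le 2|\gamma|<\cee$ automatically. Write $\pi(\gamma)=(\beta,\alpha,i)$. If $i=0$, choose $p\in F_\beta\setminus(\Q\cup U^{(\gamma)})$ and put $p$ into $A_\alpha$. If $i=1$, choose such a~$p$ and put it into~$V$. If $i=2$ and $f_\beta\in\calF'$, choose $p\in C(f_\beta)$ with both $p$ and $f_\beta(p)$ outside $\Q\cup U^{(\gamma)}$, put $p$ into $A_\alpha$, and put $f_\beta(p)$ into~$V$. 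In each case the set of forbidden choices has cardinality $<\cee$ while $F_\beta$ and $C(f_\beta)$ have cardinality~$\cee$; injectivity of $f_\beta$ on $C(f_\beta)$ bounds the preimage contribution in the last case, so such a~$p$ exists.

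Set $A_\alpha=\bigcup_{\gamma<\cee}A^{(\gamma)}_\alpha$ and $V=\bigcup_{\gamma<\cee}V^{(\gamma)}$. Pairwise disjointness and disjointness from~$\Q$ are immediate from the construction. For the Bernstein property, every $F_\beta$ meets every $A_\alpha$ and meets $V$ via the $i=0$ and $i=1$ steps, and the complements also meet $F_\beta$ because those steps place points of $F_\beta$ into the other pieces. For the final clause, given $f\in\calF'$ and $\alpha<\cee$ there are $\cee$ stages $\gamma$ with $\pi(\gamma)=(\beta,\alpha,2)$ and $f_\beta=f$, each contributing a fresh point to $C(f)\cap A_\alpha$ whose $f$-image is placed in~$V$; injectivity of $f$ on $C(f)$ then yields $\cee$ distinct points of $f\bigl[C(f)\cap A_\alpha\bigr]\cap V$.

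The main technical point is the bookkeeping: the enumeration $\pi$ must handle every triple $(\beta,\alpha,i)$ exactly $\cee$ times while each stage commits only boundedly many new points, so that the size-$<\cee$ invariant persists throughout the recursion. Once that scaffolding is in place, the per-stage selection is routine, since at each intermediate stage we need only avoid a set of size $<\cee$ inside a target set of size~$\cee$.
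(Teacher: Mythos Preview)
Your proof is correct and follows essentially the same Sierpi\'nski-style transfinite recursion as the paper's: at each of $\cee$~many stages avoid a set of size~$<\cee$ inside a target of size~$\cee$, using bookkeeping that visits every $(\text{function},\alpha)$ and $(\text{closed set},\alpha)$ pair $\cee$~many times. The only difference is organizational---the paper indexes by~$\cee\times\cee$ and selects four points per stage (one each for $A_\alpha\cap C(f)$, its $f$-image in~$V$, $A_\alpha\cap F$, and $V\cap F$), whereas you split into three task types and select one or two points per stage---but this is a cosmetic repackaging of the same argument.
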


\begin{proof}
Since $[0,1]$ has cardinality~$\cee$ it also has $\cee$~many co-countable
subsets.
Since each subset of~$[0,1]$ is separable every co-countable set has
$\cee$~many continuous functions to~$[0,1]$.
Hence we may enumerate the subfamily of~$\calF$ consisting of those~$f$
for which $f\bigl[S(f)\bigr]$~is uncountable as~$\ceeseq f$,
in such a way that every~$f$ occurs $\cee$~many times in the sequence. 
We take a similar enumeration $\ceeseq F$ of the family of uncountable
closed subsets of~$[0,1]$ (each set is listed $\cee$~times).

To facilitate the construction we replicate both enumerations $\cee$~times
and turn them into $\cee\times\cee$-matrices:
$\{f_{\alpha,\beta}:\orpr\alpha\beta\in\cee^2\}$ and
$\{F_{\alpha,\beta}:\orpr\alpha\beta\in\cee^2\}$,
where $f_{\alpha,\beta}=f_\beta$ and $F_{\alpha,\beta}=F_\beta$ for 
all $\alpha$ and $\beta$.
We also take a well-order~$\prec$ of~$\cee^2$ in order type~$\cee$.

By recursion on the well-order~$\prec$ we will choose points
$a_{\alpha,\beta}$,
$b_{\alpha,\beta}$,
$u_{\alpha,\beta}$, and 
$v_{\alpha,\beta}$,
as follows.

When the points have been found for $\orpr\gamma\delta\prec\orpr\alpha\beta$
collect them and the rational numbers in a set: 
$P=\Q\cup\bigcup_{\orpr\gamma\delta\prec\orpr\alpha\beta}
   \{a_{\gamma,\delta}, b_{\gamma,\delta}, u_{\gamma,\delta}, v_{\gamma,\delta}\}$.
Note that the cardinality of~$P$ is strictly smaller than~$\cee$.
Therefore we can find $a_{\alpha,\beta}\in C(f_{\alpha,\beta})\setminus P$ such that
$u_{\alpha,\beta}=f_{\alpha,\beta}(a_{\alpha,\beta})\notin P$;
and note that $u_{\alpha,\beta}\neq a_{\alpha,\beta}$.
Next take points $b_{\alpha,\beta}$ and $v_{\alpha,\beta}$ in 
$F_{\alpha,\beta}\setminus(P\cup\{a_{\alpha,\beta},v_{\alpha,\beta}\})$
such that $b_{\alpha,\beta}\neq v_{\alpha,\beta}$.

Now, by construction all points chosen in this way are distinct.
For every $\alpha\in\cee$ we let
$$
A_\alpha=\{a_{\alpha,\beta}:\beta\in\cee\}\cup\{b_{\alpha,\beta}:\beta\in\cee\}
$$
and we let
$$
V=\{u_{\alpha,\beta}:\orpr\alpha\beta\in\cee^2\}
     \cup\{v_{\alpha,\beta}:\orpr\alpha\beta\in\cee^2\}
$$
Because all points chosen are distinct the family
$\{V\}\cup\{A_\alpha:\alpha\in\cee\}$
is pairwise disjoint.

The sets are Bernstein sets because
$A_\alpha\cap F\supseteq\{b_{\alpha,\beta}: F=F_{\alpha,\beta}\}$
and 
$V\cap F\supseteq\{v_{\alpha,\beta}: F=F_{\alpha,\beta}\}$,
both intersections have cardinality~$\cee$.

Likewise, if $f\bigl[S(f)\bigr]$ has cardinality~$\cee$ then
$A_\alpha\cap C(f)\supseteq\{a_{\alpha,\beta}:f=f_{\alpha,\beta}\}$
and
$V\cap f[A_\alpha\cap C(f)]\supseteq\{u_{\alpha,\beta}:f=f_{\alpha,\beta}\}$;
again both sets have cardinality~$\cee$.
\end{proof}

Note that since $V$ is disjoint from $\Q$ and all sets $A_\alpha$ it is also
disjoint from all sets $S_X$.
This makes good on the promise that the sets used as input for the construction
have a dense complement: the Bernstein set $V$ is dense.

It now remains to show that the resulting family
$\{K_X:X\subseteq\cee\}$ of compact zero-dimensional spaces
has the desired properties.
We already know that $K_X$ maps onto~$K_Y$ if $X\subseteq Y$.
We prove the other implication in the next section.
There it will become clear what the function of the set~$V$ is,
and why $\Q$~is a subset of~$S_X$ for all~$X$.

\section{Non-existence of continuous surjections}

The following lemma implies that if $X$ and $Y$ are subsets of~$\cee$ 
such that $X\notsubseteq Y$ then there
is no continuous surjection from $K_X$ onto~$K_Y$. 

\begin{lemma}\label{lemma.range}
Let $X$ and $Y$ be subsets of $(0,1)$ such that $\Q\subseteq X$ and such that
there is an~$\alpha$ for which $A_\alpha\subseteq X$ 
and $Y\cap(A_\alpha\cup V)=\emptyset$.
Then every continuous map $s:\A_X\to\A_Y$ has a countable range.
\end{lemma}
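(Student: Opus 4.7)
The plan is to work with the auxiliary continuous map $g=t\circ s:\A_X\to[0,1]$, where $t:\A_Y\to[0,1]$ sends $\orpr xi$ to $x$, and to prove that $g$ itself has countable range; this will finish the proof because the fibres of $t$ have at most two points, so $|s[\A_X]|\le 2\cdot|g[\A_X]|$.

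Because $A_\alpha\subseteq X$ none of its points are split in $\A_X$, so the subspace topology $A_\alpha$ inherits from $\A_X$ is Euclidean and $g|_{A_\alpha}:A_\alpha\to[0,1]$ is Euclidean-continuous. By Lavrentiev's extension theorem it extends to a continuous $\bar g:B\to[0,1]$ for some $G_\delta$ set $B\supseteq A_\alpha$ in $[0,1]$. Since $A_\alpha$ is a Bernstein set, the complement of $B$ is an $F_\sigma$ inside the totally imperfect set $[0,1]\setminus A_\alpha$, hence countable; so $B$ is co-countable and $\bar g\in\calF$.

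The key step---and the main obstacle---is to rule out the possibility that $\bar g[S(\bar g)]$ has cardinality $\cee$, and this is where the set $V$ earns its keep. If it had, Proposition~\ref{prop.family} would supply a set $W\subseteq A_\alpha\cap C(\bar g)\cap\bar g^{-1}[V]$ of cardinality $\cee$ on which $\bar g$ is injective with all values in $V$. Since $V\cap Y=\emptyset$, both $\orpr{\bar g(w)}{0}$ and $\orpr{\bar g(w)}{1}$ belong to $\A_Y$, so $s(w)=\orpr{\bar g(w)}{i_w}$ for some $i_w\in\{0,1\}$. Splitting $W$ according to the value of $i_w$, one part, say $W^0$, would still have cardinality $\cee$. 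For $w\in W^0$ the neighbourhoods of $\orpr{\bar g(w)}{0}$ in $\A_Y$ are one-sided from the left, so continuity of $s$ at $w$ forces $\bar g(z)\le\bar g(w)$ for every $z\in A_\alpha$ close to $w$, and injectivity of $\bar g$ on $W^0$ makes every point of $W^0$ a \emph{strict} local maximum of $\bar g|_{W^0}$. A standard rational-interval pigeonhole then bounds the size of $W^0$ by $|\Q^2|$, contradicting $|W^0|=\cee$.

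With $\bar g[S(\bar g)]$ known to be countable, the same one-sided continuity and rational-interval trick, applied to the set $A_\alpha\cap E(\bar g)$ on which $\bar g$ is the identity, shows $A_\alpha\cap E(\bar g)$ is countable. To lift this to countability of all of $E(\bar g)$ I would observe that $E(\bar g)$ is the zero set in $B$ of $y\mapsto\bar g(y)-y$, hence $G_\delta$ in $[0,1]$ and therefore Polish; an uncountable Polish subset of $[0,1]$ would contain a Cantor set $K$, and then the Bernstein property of $A_\alpha$ would make $A_\alpha\cap K$ uncountable, contradicting the count just obtained. Thus $\bar g[B]=E(\bar g)\cup\bar g[S(\bar g)]$ is countable, and the identity $g(\orpr yi)=\bar g(y)$, valid for every $\orpr yi\in\A_X$ with $y\in B$ (obtained by approaching $\orpr yi$ through $A_\alpha$ from the appropriate side), together with the countability of $[0,1]\setminus B$, gives that $g[\A_X]$ is countable, as required.
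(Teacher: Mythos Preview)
Your proof is correct and follows essentially the same route as the paper's: extend $g\restriction A_\alpha$ via Lavrentieff to a map in~$\calF$, use a rational-interval pigeonhole (you phrase it via strict local extrema of~$\bar g$, the paper via explicit intervals in~$\A_X$, which is where it invokes $\Q\subseteq X$) to show that both $A_\alpha\cap E(\bar g)$ and $\{x\in A_\alpha\cap C(\bar g):\bar g(x)\in V\}$ are countable, then invoke Proposition~\ref{prop.family} and the perfect-set dichotomy for Polish sets to finish. The only differences are cosmetic---you treat $S(\bar g)$ before $E(\bar g)$ whereas the paper does the reverse, and your pigeonhole lives in Euclidean~$[0,1]$ rather than in~$\A_X$---and one minor imprecision: the bare Bernstein property only gives $A_\alpha\cap K\neq\emptyset$, but Proposition~\ref{prop.family} in fact makes $|A_\alpha\cap K|=\cee$ for every uncountable closed~$K$, which is what your contradiction in the $E(\bar g)$ step actually uses.
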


\begin{proof}
Let us write $A$ for $A_\alpha$ and let $t:\A_Y\to[0,1]$ be the natural 
surjection.
Also, we identify $x$ and $\orpr x0$ when $x\in X$.

As observed before the topology on~$A$ in $\A_X$ is the same as its subspace
topology in~$[0,1]$.
Let $g$ be the restriction of $(t\circ s)$ to~$A$.

By one half of Lavrentieff's theorem 
(Theorem 4.3.20 in~\cite{MR1039321})
we can find a $G_\delta$-set~$G$ that contains~$A$ and a continuous 
map $f:G\to[0,1]$ that extends~$g$.
The complement, $C$, of~$G$ in~$[0,1]$ is a countable union of closed sets, 
each of which is countable because closed sets that are disjoint from
a Bernstein set are countable; and $A$~is a Bernstein set.
It follows that $f$ belongs to the family $\calF$.

\smallskip
The set $A$ is dense in $[0,1]$, and the maps $f$ and $t\circ s$ agree on~$A$.
This implies that $f$ determines much of the behaviour of~$s$ on~$G$,
in the following way.
\begin{itemize}
\item If $x\in G\cap X$ then $x$~is not split and $(t\circ s)(x)=f(x)$ and 
      this implies that $s(x)\in\bigl\{\orpr{f(x)}0, \orpr{f(x)}1\bigr\}$.
\item If $x\in G\setminus X$ then $x$~is split and the continuity of~$f$
      implies that $(t\circ s)(x,0)=(t\circ s)(x,1)=f(x)$ and so
  $\bigl\{s(x,0),s(x,1)\bigr\}\subseteq\bigl\{\orpr{f(x)}0,\orpr{f(x)}1\bigr\}$.
\end{itemize}
It follows that the range of~$s$ is contained in the union of
$f[G]\times\{0,1\}$ and the image of the countable set of points whose
first coordinates are in the countable set~$C$.

We finish the proof by showing that $f[G]$ is countable.

\smallskip
Let $x\in E(f)\cap A$, then $f(x)=x$ and so $s(x)=\orpr x0$ or $s(x)=\orpr x1$.
We divide $E(f)\cap A$ into two sets: $E_0=\{x:s(x)=\orpr x0\}$
and $E_1=\{x:s(x)=\orpr x1\}$.

If $x\in E_0$ then by continuity of~$s$ there is an interval $(p_x,q_x)$
that contains~$x$, with rational end points,
and such that $s[(p_x,q_x)]\subseteq [0,\orpr x0]$.
Here we use that $\Q\subseteq X$: we can talk without ambiguity about 
intervals with rational end points.
It is clear that when $x<y$ in~$E_0$ we have 
$y\in(p_y,q_y)\setminus(p_x,q_x)$, and it follows that $x\mapsto(p_x,q_x)$
is injective. We deduce that $E_0$~is countable.
Likewise one shows that $E_1$~is countable.

We see that $E(f)\cap A$ is countable, and because $A$~is a Bernstein set
it follows that $E(f)$~itself is countable

\smallskip
Next we let $x\in C(f)\cap A$ such that $f(x)\in V$.
Then $f(x)\notin Y$ and so $f(x)$~is split in~$\A_Y$,
and $s(x)\in \{\orpr{f(x)}0,\orpr{f(x)}1\bigr\}$; we split
$\{x\in C(f)\cap A:f(x)\in V\}$ 
into $C_0=\bigl\{x:s(x)=\orpr{f(x)}0\bigr\}$ 
and $C_1=\bigl\{x:s(x)=\orpr{f(x)}1\bigr\}$.

As above we take for $x\in C_0$ an interval $(p_x,q_x)$ that contains~$x$,
with rational end points, and such that 
$s[(p_x,q_x)]\subseteq [0,\orpr{f(x)}0]$.
If $x\neq y$ in~$C_0$ then $f(x)\neq f(y)$ because $f$~is injective on~$C(f)$;
if, say, $f(x)<f(y)$ then $y\in(p_y,q_y)\setminus(p_x,q_x)$ and it follows
that $x\mapsto(p_x,q_x)$ is injective.
We conclude, as above, that $C_0$~is countable, as is~$C_1$.

\smallskip
We see that $f[C(f)\cap A]\cap V$ is countable and hence, 
by the properties of the family $\{V\}\cup\{A_\beta:\beta\in\cee\}$
in Proposition~\ref{prop.family}, 
that $f\bigl[S(f)\bigr]$ does not have cardinality~$\cee$.
But as noted in the remarks before that proposition this means that
$f\bigl[S(f)\bigr]$~is countable. 

Thus we see that $f[G]=f\bigl[E(f)\bigr]\cup f\bigl[S(f)\bigr]$ is countable.
\end{proof}

Now let $X$ and $Y$ be subsets of~$\cee$ such that $X\notsubseteq Y$ and take
$\alpha\in X\setminus Y$.
Then $S_X$ and $S_Y$ satisfy the conditions of Lemma~\ref{lemma.range}.
Indeed, by definition we have $\Q\cup A_\alpha\subseteq S_X$ and
$(A_\alpha\cup V)\cap S_Y=\emptyset$.

The lemma then tells us that every continuous map $s:K_X\to K_Y$ has a 
countable range, so that $K_Y$~is not a continuous image of~$K_X$. 

\section{Murray Bell's answer}

In \cite{MR0735899} Murray Bell constructed a sequence~$\Nseq{B}$ of 
Boolean subalgebras of~$\Pwmodfin$ with the following properties:
\begin{itemize}
\item The algebra $B_1$ is ccc but not $\sigma$-$2$-linked, and
\item for $n\ge2$ the algebra $B_n$ is $\sigma$-$n$-linked
      but not $\sigma$-$(n+1)$-linked.
\end{itemize}
By definition a Boolean algebra~$B$ is $\sigma$-$n$-linked if one can write
it as a union~$\bigcup_{k\in\omega}L_k$ of countably many subsets that 
are $n$-linked, 
which means that $\bigwedge F>0$ whenever $F$~is an $n$-element subset 
of~$L_k$.

Since a subalgebra of a $\sigma$-$n$-linked algebra is 
again $\sigma$-$n$-linked it follows at once that $B_m$~cannot be embedded 
in~$B_n$ whenever $m<n$.
Unfortunately the constructions in~\cite{MR0735899} are such that the 
sequence~$\Nseq{B}$ is not decreasing, 
nor does there seem to be a straightforward 
way of embedding~$B_n$ into~$B_m$ when $m<n$.

Fortunately there is a relatively easy way out via Stone duality.
For each~$n\in\N$ let $S_n$ be the Stone space of~$B_n$, and then let
$X_n$ be the one-point compactification of the topological sum
$\bigoplus_{k\ge n}S_k$, with~$\infty_n$ its point at infinity.

\begin{lemma}
For $n\ge2$ the clopen algebra~$C_n$ of~$X_n$ is $\sigma$-$n$-linked.
\end{lemma}

\begin{proof}
For $k\ge n$ we have $B_k\setminus\{0\}=\bigcup_{m\in\omega}L_{k,m}$ 
where each~$L_{k,m}$ is $k$-linked and, a fortiori, $n$-linked.

For $k\ge n$ and $m\in\omega$ let
$D_{k,m}=\{C\in C_n\setminus\{0\}:C\cap S_k\in L_{k,m}\}$.
Each family $D_{k,m}$~is $n$-linked, hence $C_n$~is $\sigma$-$n$-linked.
\end{proof}

This ensures, as above, that $C_m$ cannot be embedded into~$C_n$ when $m<n$.

\bigskip
To show that $\Nseq{C}$ corresponds to a sequence of subalgebras
of~$\Pwmodfin$ we define onto mappings between the Stone spaces.

If $m<n$ then we can map $X_m$ onto~$X_n$, and hence embed $C_n$ into~$C_m$,
by 
\begin{itemize}
\item mapping $x$ to $\infty_n$ when 
      $x\in \{\infty_m\}\cup\bigcup_{m\le k<n}S_k$, 
\item mapping $x$ to itself if $x\in\bigcup_{k\ge n}S_k$.
\end{itemize}

It remains to map $\omega^*$ onto~$X_1$.
To this end look at the countable set~$\N\times\omega$ and divide it into the
vertical lines~$V_n=\{n\}\times\omega$.
For each~$k$ we have a continuous surjection $f_k:V_k^*\to S_k$.
The union~$f$ of the maps~$f_n$ is a continuous map 
from~$\bigcup_{k\in\N}V_k^*$ onto~$\bigoplus_{k\in\N}S_k$, and because
$(\N\times\omega)^*$ is an $F$-space we can extend $f$ to a continuous
map from $(\N\times\omega)^*$ onto~$X_1$.

The sequence $\Nseq{C}$ of Boolean algebras is as desired.

\subsection*{Acknowledgment} 
Thanks to Alan Dow for discussions leading
to the last section.

\begin{bibdiv}
\begin{biblist}

\bib{MR0735899}{article}{
   author={Bell, Murray},
   title={Two Boolean algebras with extreme cellular and compactness
   properties},
   journal={Canad. J. Math.},
   volume={35},
   date={1983},
   number={5},
   pages={824--838},
   issn={0008-414X},
   review={\MR{0735899}},
   doi={10.4153/CJM-1983-047-7},
}

\bib{zbMATH02640876}{article}{
 author={Bernstein, F.},
 title={Zur {Theorie} der trigonometrischen {Reihe}.},
 language={German},
 journal={Leipz. {Ber}.}, 
 volume={60},
 date={1908},
 pages={325--338},
 review={\Zbl{39.0474.02}},
}

\bib{MR1039321}{book}{
   author={Engelking, Ryszard},
   title={General topology},
   series={Sigma Series in Pure Mathematics},
   volume={6},
   edition={2},
   note={Translated from the Polish by the author},
   publisher={Heldermann Verlag, Berlin},
   date={1989},
   pages={viii+529},
   isbn={3-88538-006-4},
   review={\MR{1039321}},
}

\bib{overflow442111}{webpage}{
  author={Gruszczyński, Rafał},
   title={A strictly descending chain of subalgebras of $\Pwmodfin$},
    date={2023-03-05},
     url={https://mathoverflow.net/questions/442111},
}

\bib{MR808722}{article}{
   author={Hart, Klaas Pieter},
   author={van Mill, Jan},
   title={A method for constructing ordered continua},
   journal={Topology Appl.},
   volume={21},
   date={1985},
   number={1},
   pages={35--49},
   issn={0166-8641},
   review={\MR{808722}},
   doi={10.1016/0166-8641(85)90056-2},
}
	
\bib{MR991565}{book}{
   author={Koppelberg, Sabine},
   title={Handbook of Boolean algebras. Vol. 1},
   note={Edited by J. Donald Monk and Robert Bonnet},
   publisher={North-Holland Publishing Co., Amsterdam},
   date={1989},
   pages={xx+312l},
   isbn={0-444-70261-X},
   review={\MR{991565}},
}
	
\bib{MR1173263}{article}{
   author={van Mill, Jan},
   title={Sierpi\'{n}ski's technique and subsets of $\mathbf{R}$},
   note={Proceedings of the Symposium on General Topology and
   Applications (Oxford, 1989)},
   journal={Topology Appl.},
   volume={44},
   date={1992},
   number={1-3},
   pages={241--261},
   issn={0166-8641},
   review={\MR{1173263}},
   doi={10.1016/0166-8641(92)90099-L},
}

\bib{zbMATH03006477}{article}{,
 author={Sierpi{\'n}ski, Wac{\l}aw},
  title={Sur un probl{\`e}me concernant les types de dimensions},
 journal={Fundam. Math.},
 volume={19},
 date={1932},
 pages={65--71},
 doi={10.4064/fm-19-1-65-71},
 zbMATH={3006477},
 review={\Zbl{0005.19702}}
}

\end{biblist}
\end{bibdiv}

\end{document}